\newcommand{\N}{\mathcal{N}} 
\newcommand{\fg}{\mathfrak g}
\newcommand{\fp}{\mathfrak p}
\newcommand{\fn}{\mathfrak n}
\newcommand{\Ad}{{\rm Ad}}
\newcommand{\R}{\mathbb{R}}
\newtheorem{prop*}{Proposition}
\newtheorem{thm*}{Theorem}
\newtheorem{lemma*}{Lemma}
\newtheorem{cor*}{Corollary}
\newtheorem{rem*}{Remark}
\newtheorem{def*}{Definition}
\begin{document}

\title[]{
On symmetries of a sub--Riemannian structure with growth vector $(4,7)$
}

\keywords{
Nilpotent algebras, Lie symmetry group, Carnot groups,  sub--Riemannian geodesics}

\subjclass[2020]{53C17, 22E60, 35R03}

\author{
Jaroslav Hrdina, Ale\v s N\' avrat and Lenka Zalabov\'a
}

\address{
JH, AN: Institute of Mathematics, 
	Faculty of Mechanical Engineering,  Brno University of Technology,
	Technick\' a 2896/2, 616 69 Brno, Czech Republic; LZ: Institute of Mathematics, 
	Faculty of Science, University of South Bohemia, 
	Brani\v sovsk\' a 1760, 370 05 \v Cesk\' e Bud\v ejovice, and
    Department of Mathematics and Statistics, 
	Faculty of Science, Masaryk University,
	Kotl\' a\v rsk\' a 2, 611 37 Brno, Czech Republic
}

\email{hrdina@fme.vutbr.cz, navrat.a@fme.vutbr.cz,lzalabova@gmail.com
}	

\thanks{
The first and second authors was supported by the grant no. FSI-S-20-6187. Third author is supported by grant no. 20-11473S Symmetry and invariance in analysis, geometric modeling and control theory from the Czech Science Foundation. 
We thank to Luca Rizzi for  useful discussions during Winter School Geometry and Physics, Srn\'i, 2020. Finally, we thank the referee for valuable comments.
}

\maketitle

\thispagestyle{empty}

\vspace{7pt}

\begin{abstract}
We study symmetries of specific left--invariant sub--Riemannian structure with filtration $(4,7)$ and their impact on sub--Riemannian geodesics of corresponding control problem. We show that there are two very different types of geodesics, they either do not intersect the fixed point set of symmetries or are contained in this set for all times. We use the symmetry reduction to study properties of geodesics.
\end{abstract}

\section{Introduction}

Symmetries of geometric structures play an important role in differential geometry and geometric control theory. Indeed, the existence of big amount symmetries or the existence of a special symmetry of the geometric structure often induces restrictions on its properties like the curvature etc. In particular, if the symmetry group acts transitively, the space is homogeneous and one can read off many properties just by restricting to one point, \cite{CS,zel}. 
Moreover, in geometric control theory, symmetries of control systems and their fixed points can be used for finding of distinguished points of geodesics like cut or cusp points, \cite{mya1,Rizzi,mope}. 
In this paper, we focus on the role of symmetries and their fixed points for specific filtration with the growth vector $(4,7)$ and their impact on special geodesics of the corresponding sub--Riemannian structure.

The motivation from applications comes from  \cite{hz} where the first and third authors study local control of a planar mechanism with $7$--dimensional configuration space. The robot in question consist of a root block in the shape of an equilateral triangle together with three branches that have passive wheels at their ends, where each of the branches is connected to one vertex of the root block via prismatic joint and one of the joints is simultaneously revolute joint, see Figure \ref{t}. 

 \begin{figure}[h]

 \includegraphics[height=40mm]{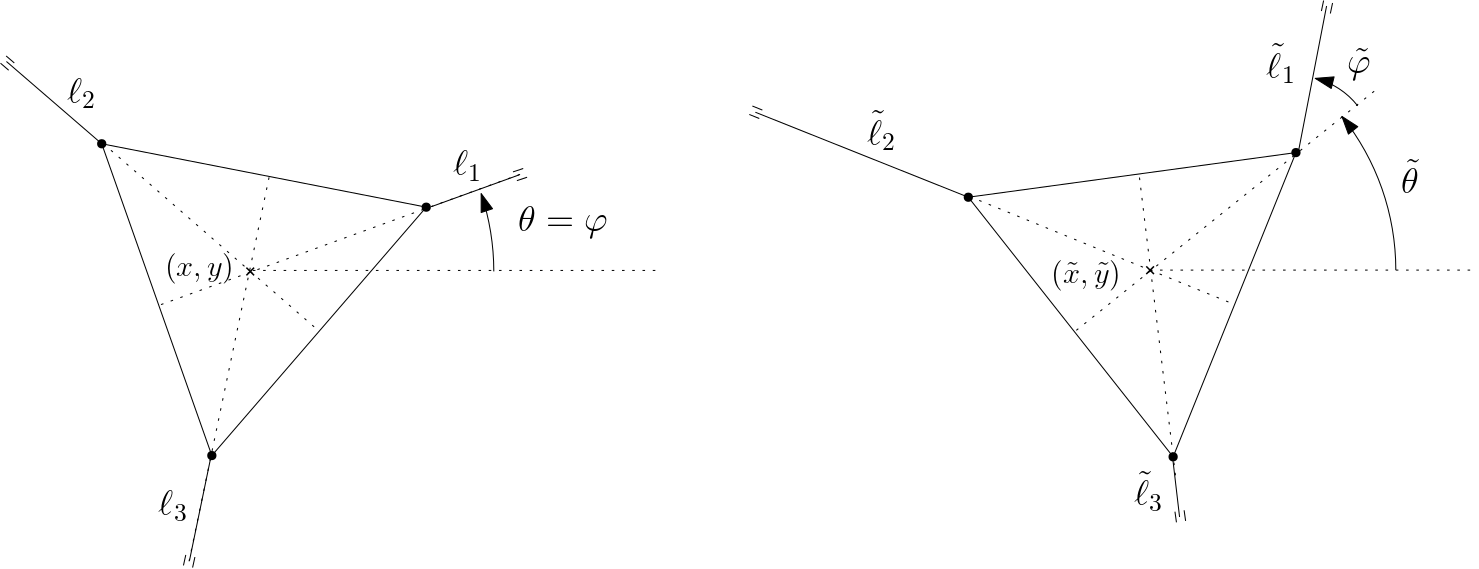}
 \caption{Robot motion in the configuration space with local coordinates  
 $(x,\ell_1,\ell_2,\ell_3,y,\theta, \varphi)$ 
 }
 \label{t}
 \end{figure}

Under the assumption that the robot moves with no slipping nor sliding, one derives three non--holonomic condition of the motion, one for each branch, and these determine $4$--dimensional distribution of admissible directions given (locally) on the configuration space. The choice of a sub--Riemannian metric allows to study (local)
optimal control, however, corresponding control problem is highly non--linear and hard to solve. Nevertheless,  it is sufficient (locally) to swap to its nilpotent approximation $N$, \cite{b96,J}.

Denoting by $N_0,N_1,N_2,N_3$ generators of the $4$--distribution $\N$ on $N$, it turns out that  the only non--trivial Lie brackets are the brackets $N_{01}:=[N_0,N_1]$, $N_{02}:=[N_0,N_2]$ and $N_{03}:=[N_0,N_3]$, so we get a nilpotent Lie algebra $\fn$.
In particular, these brackets do not belong to $\N$ and $(N,\N)$ is a Carnot group with filtration $(4,7)$. 
Altogether, we get the flat distribution $(N,\N)$ of constant type $\fn$.
Moreover, the choice of generators allows us to consider the decomposition
$E+V=\langle N_0\rangle+\langle N_1,N_2,N_3 \rangle$ 
of $\N$ into one--dimensional and three--dimensional involutive distributions
and compatible sub--Riemannian metric $g$ by declaring the vectors $N_i,i=0,1,2,3$ orthonormal. 
In fact, $(N,\N,E+V)$ can be viewed as a flat model of so--called generalized path geometry, \cite{CS}. Alongside, it turns out that the sub--Riemannian structure $(N,\N,g)$ is a flat structure of constant type $(\fn,\frak{so}(3,\R))$, \cite{zel}, i.e. the metric $g$ is invariant with respect to  the action of suitable $SO(3,\R)$. We describe these structures and their symmetries in detail in Section \ref{sec2}.

In Section \ref{sec-control}, we study control problem corresponding to the sub--Riemannian structure in question. We apply Hamiltonian concepts to approach this control problem, \cite{ABB}. In particular, we describe control functions and normal geodesics of the problem in detail (and strictly abnormal geodesics cannot appear for $1$--step filtrations, so we speak just about geodesics).
Let us remind that the set of points where geodesics intersect each other and the corresponding geodesic segments have equal length is called the Maxwell set. Conjungate points are defined as critical points of the exponential map. It is proved that the normal extremal trajectory that does not contain pieces of abnormal geodesics loses its optimality in the conjungate point or in the Maxwell point, \cite{ABB}. 

In many cases, Maxwell set contains sets of fixed points of symmetries. Indeed, if a geodesic meets a fixed point of a symmetry, then the action of the symmetry can give such set of geodesics, \cite{Rizzi, talianky, mya1}. We show in Section \ref{sec4} that this is not the case of our filtration. 
In particular, we study relations of geodesics and fixed--point set of symmetries. We show in Theorem \ref{thm1} that each geodesic starting at the origin either do not intersect the fixed--point set or is contained in this set for all times. Thus geodesics are of two very different types. We use the symmetry reduction to study geodesics contained in the fixed-point set. In particular, we relate these geodesics to geodesics in the Heisenberg group to find their cut--time in Theorem \ref{thm2}.

\section{Model Carnot group equipped with filtration $(4,7)$} \label{sec2}
Let us consider 
coordinates $(x,\ell_1,\ell_2,\ell_3,y_1,y_2,y_3)$ of vector space
$\R^7 \cong \R^4 \oplus \R^3 $
and model vector fields
\begin{align}
\begin{split}
&N_0=\partial_x-{\ell_1 \over 2}{\partial_{y_1}}-{\ell_2 \over 2}{\partial_{y_2}}-{\ell_3 \over 2}{\partial_{y_3}},\\ 
&N_1=\partial_{\ell_1}+{x \over 2} \partial_{y_1}, \ \ 
N_2=\partial_{\ell_2}+{x \over 2} \partial_{y_2}, \ \ 
N_3=\partial_{\ell_3}+{x \over 2} \partial_{y_3}, 
 \label{vf}
\end{split}
\end{align}
where the symbol $\partial$ stands for partial derivative. Let us note that these fields are precisely symmetric model vector fields introduced in \cite[Section 7.5.1]{ABB}. The only non--trivial Lie brackets are 
\begin{align}\label{n010203}
\begin{split}
N_{01}=[N_0,N_1]= \partial_{y_1}, \ \ N_{02}=[N_0,N_2]= \partial_{y_2}, \ \ N_{03}=[N_0,N_3]= \partial_{y_3}
\end{split}.
\end{align}
 The fields \eqref{vf}  and \eqref{n010203} then determine a  $2$--step nilpotent Lie algebra $\fn$.

\begin{rem*} \label{rem1}
Let us remark that each triple 
$N_0, N_h \in \langle N_1,N_2,N_3 \rangle$  and $[N_0,N_h]$ 
form a $3$--dimensional Heisenberg subalgebra in the Lie algebra $\fn$. Thus $\fn$ can be naively viewed as a `bunch' of  Heisenberg algebras.
\end{rem*}

The Lie algebra $\fn$ corresponds to  a Carnot group $N$ such that the fields $N_0,$ $N_1,$ $N_2,$ $N_3,$ $N_{01},$ $N_{02},$ $N_{03}$ are left--invariant for the corresponding group structure. 
We can compute this group structure just by taking the flows. 
Under identification of a point 
$g_1=(x,\ell_1,\ell_2,\ell_3,y_1,y_2,y_3) \in N$ with  image of the exponential map $g_1=\exp(x N_0 + \ell_1 N_1 + \ell_1 N_2+ \ell_1 N_3 
+ y_{1} N_{01} + y_{2} N_{02}+ y_{03} N_{03}  ) (o)$
the product is given by 
$$g_1g_2=\exp(t(x N_0 + \ell_1 N_1 + \ell_1 N_2+ \ell_1 N_3 
+ y_{1} N_{01} + y_{2} N_{02}+ y_{03} N_{03} )) (g_2) $$
evaluated in $t=1$. Using the Lie algebra structure \eqref{n010203}
 the group structure on~$N =  \mathbb R^4 \oplus \mathbb R^3$ reads as follows
\begin{align}\label{grupa}
\begin{pmatrix} 
x \\ \ell_1 \\ \ell_2\\ \ell_3\\ y_1\\ y_2\\ y_3
\end{pmatrix}
\cdot 
 \begin{pmatrix} 
 \tilde x \\ \tilde \ell_1 \\ \tilde \ell_2\\ \tilde \ell_3\\ \tilde y_1\\ \tilde y_2\\ \tilde y_3
 \end{pmatrix}
 =
 \begin{pmatrix} 
x+  \tilde x \\ \ell_1 + \tilde \ell_1 \\\ell_2 + \tilde \ell_2\\\ell_3+ \tilde \ell_3\\
y_1+ \tilde y_1 + \frac{1}{2} (x \tilde \ell_1 - \tilde x \ell_1)\\y_2+ \tilde y_2 + \frac{1}{2} (x \tilde \ell_2 - \tilde x \ell_2)\\y_3+ \tilde y_3 + \frac{1}{2} (x \tilde \ell_3 - \tilde x \ell_3)
  \end{pmatrix}.
 \end{align}

In particular, $\N=\langle N_0,N_1,N_2,N_3\rangle$ forms a $4$--dimensional left--invariant distribution on $N$. Moreover, our choice allows us to consider the decomposition
\begin{align}
\label{decom}
\N=\langle N_0 \rangle \oplus \langle N_1, N_2, N_3 \rangle 
\end{align}
of $\N$ into $1$--dimensional distribution and $3$--dimensional involutive distribution,  both left--invariant. Then by declaring $N_0$, $N_1$, $N_2$, $N_3$ orthonormal we define compatible sub--Riemannian metric $g$ on $\N$. Altogether, we get left--invariant sub--Riemannian 
structure $(N,\N,g)$ which is related to the left--invariant optimal control problem
 written in coordinates $(x, \ell_1,\ell_2,\ell_3,y_1,y_2,y_3)$
as   

\begin{align} \label{control}
\dot q (t)
=u_0\left(
\begin{smallmatrix}
1\\
0\\
0\\
0\\
-{\ell_1 \over 2}\\
-{\ell_2 \over 2}\\
-{\ell_3 \over 2}
\end{smallmatrix}
\right) 
+u_1 \left(
\begin{smallmatrix}
0\\
1\\
0\\
0\\
{x \over 2}\\
0\\
0
\end{smallmatrix}
\right) 
+u_2 \left(
\begin{smallmatrix}
0\\
0\\
1\\
0\\
0\\
{x \over 2}\\
0
\end{smallmatrix}
\right) 
+u_3 \left(
\begin{smallmatrix}
0\\
0\\
0\\
1\\
0\\
0\\
{x \over 2}
\end{smallmatrix}
\right) 
\end{align}
for $t>0$ and $q$ in $N$ and the control $u=(u_0,u_1,u_2,u_3) \in \R^4$ with the boundary condition $q(0)=q_1,$ $q(T)=q_2$ for fixed points $q_1, q_2 \in N$,
where we minimize 
\begin{align} \label{energy}
{1 \over 2}\int_0^T (u_0^2+u_1^2+u_2^2+u_3^2) dt.
\end{align}
Symmetries of the left--invariant sub--Riemannian structure $(N,\N, g)$, i.e. symmetries of the control system (\ref{control},\ref{energy}),
are automorphisms of $N$ preserving the distribution $\N$ and sub--Riemannian metric $g$. 
They form a finite--dimensional Lie group and we can describe its Lie algebra of infinitesimal symmetries using Cartan--Tanaka theory since we deal with flat distribution, \cite{zel,ams}.

Let us view $\fn=\fn_{-1} \oplus \fn_{-2}$ as an abstract Lie algebra with $\fn_{-1}$ spanned by $e_i=N_i(o)$, $i=0,..,3$ and $\fn_{-2}$ spanned by $e_{i+4}=N_{0i}(o)$, $i=1,..,3$. Here $o$ denotes the origin, i.e. identity element. Then the distribution $\N$ corresponds to the subspace $\fn_{-1} \subset \fn$ and $g$ corresponds to a $2$--tensor $g(o)$ defined on $\fn_{-1}$. 
We define
$\fn_0 \subset \frak{so}(\fn_{-1})$ 
to be the Lie algebra of the Lie group of all automorphisms of the graded
nilpotent algebra $\fn$ preserving the metric $g(o)$ on $\fn_{-1}$ , i.e. the algebra of
certain derivations of $\fn$.
Here the action of automorphisms of $\fn_0$ on $\fn_{-1}$ is exactly the adjoint action. Let us discuss explicitly the action and corresponding reduction.

\begin{lemma*} \label{akce}
The algebra $\fn_0$ of metric preserving derivations of $\fn$ equals to  $\frak{so}(3,\R)$.
\end{lemma*}
\begin{proof}
The algebra $\frak{so}(\fn_{-1}) \simeq \frak{so}(4,\R)$ is spanned by $6$ elements $e_{ij}$, $i,j=0,1,2,3$, $i<j$, each of which generates $\frak{so}(2,\R)$, so the action of $e_{ij}$ on generators of $\fn_{-1}$ takes form $[e_{ij},e_i]=e_j$ and  $[e_{ij},e_j]=-e_i$.
Let us now discuss the compatibility of this action with the Lie bracket on the whole $\fn$. The only non--vanishing Lie brackets are $[e_0,e_i]=e_{i+4}$, $i=1,2,3$, and thus $[e_j,e_k]=0$ for $j,k=1,2,3$. With the help of Jacobi identity, we compute
$$ 0=[e_{0i},[e_j,e_k]]
=[[e_{0i},e_j],e_k]+[e_j,[e_{0i},e_k]]
$$
for $i,j,k=1,2,3$.
Then for $i=j$ we get
$$ 0=[[e_{0i},e_i],e_k]+[e_i,[e_{0i},e_k]]=[-e_0,e_k] = -e_{4+k}
$$
which is a contradiction. Thus elements $e_{0i}$, $i=1,2,3$, cannot appear and $\fn_0\simeq \frak{so}(3,\R)$ spanned by $e_{ij}$, $i,j=1,2,3$.
\end{proof}
The computation from the proof of Lemma \ref{akce}. particularly implies that  
generators of $\frak{so}(3,\R)$ acts on $\frak n_{-2}$ as follows
$$ [e_{ij},e_{4+i}]
=[e_{ij},[e_0,e_i]]
=[[e_{ij},e_0],e_i]+[e_0,[e_{ij},e_i]]=[e_0,e_{j}] =e_{4+j}.
$$
The description of infinitesimal symmetries then follows.
\begin{prop*}
The Lie algebra of infinitesimal symmetries of $(N,\N,r)$ consists of right-invariant vector fields corresponding to $e_i$, $i=1,..7$ that generate all transvections on $N$ together with isotropy subalgebra isomorphic to $\frak{so}(3,\R)$.
\end{prop*}
\begin{proof}
The fact that all right--invariant vector fields determine infinitesimal symmetries follows from the fact that we deal with Lie group. Flows of right--invariant vector fields act as left translations and each right--invariant vector field is then an infinitesimal symmetry of any left--invariant object, \cite{CS}. The previous Lemma shows that the isotropy subalgebra of infinitesimal symmetries coincides with $\frak{so}(3,\R)$. Since the structure is of first order, the prolongation stops for $\fn_0$ and there cannot be symmetries of higher order, \cite{zel}.
\end{proof}

\begin{rem*}
In particular, the action given by $\frak{so}(3,\R)$ acts on Heisenberg subalgebras of $\fn$ from Remark \ref{rem1}, i.e. maps each such Heisenberg subalgebra to another Heisenberg subalgebra.
\end{rem*}

Let us note that above observations also imply that symmetries of the sub-Riemannian structure $(N,\N,r)$ preserve the decomposition $E+V:=\langle N_0 \rangle \oplus \langle N_1,N_2,N_3\rangle$. One can easily see that the decomposition satisfies
\begin{enumerate}
\item $E \cap V =0$, 
\item the Lie bracket of two sections of $V$ is a section of $E \oplus V$, and 
\item for sections $\xi \in \Gamma(E)$, $\nu \in \Gamma(V)$ and a point $q \in N$, the equation $[\xi,\nu] (q) \in E_q \oplus V_q$ implies $\xi(q)=0$ or $\nu(q)=0$.
\end{enumerate}
Geometric structures satisfying these three conditions are known as generalized path geometries, \cite[Section 4.4.3]{CS}, and correspond to parabolic geometries of type $(G,P)$ for $\fg=\frak{sl}(n,\R)$ and $\fp=\fp_{1,2}$ is the infinitesimal stabilizer of the flag of a line in a plane for the standard action.
In particular, they always have finite--dimensional Lie algebras of infinitesimal symmetries 
and maximum occurs for geometries that are locally equivalent to generalized flag manifold $G/P$ and equals to dim$(\fg)$.

It is not difficult to verify (e.g. by prolongation methods) that the Carnot group $N$ carries a maximally symmetric generalized path geometry with $\fg=\frak{sl}(5,\R)$ that particularly contains our $\frak{so}(3,\R)$ in $\fp_{1,2}$. This suggests us a way how to realize our situation using block $(1,1,3)$--matrices, \cite[Section 4.4.3]{CS}. The algebra $\frak{sl}(5,\R)$ carries a $|2|$--grading that inherits $\fn \oplus \frak{so}(3,\R)$ as follows
\begin{align} \label{algebry}
\begin{pmatrix}
\fg_0 & \fg_1 &\fg_2\\
\fn_{-1}^E & \fg_0 &  \fg_1\\
\fn_{-2} & \fn_{-1}^V & \fg_0 
\end{pmatrix}
\supset 
\begin{pmatrix}
0 & 0 &0\\
\fn_{-1}^E & 0 &  0\\
\fn_{-2} & \fn_{-1}^V & \fn_0 
\end{pmatrix},
\end{align}
and thus $\fn$ can be viewed as a choice of the complement of the stabilizer, i.e. a representative of the associated grading.

Having these matrices at hand, we can particularly view them as representatives of suitable exponential coordinates around the origin. Indeed, identifying coordinates of points around the origin with matrices
$$
\left( \begin{smallmatrix} 
0 & 0 &0\\
x & 0 &0\\
-y_i & \ell_i &0
\end{smallmatrix}
\right)
,$$
we recover the group structure \eqref{grupa} as 
\begin{align*} 
& \log \big ( \exp
\left( \begin{smallmatrix} 
0 & 0 &0\\
x & 0 &0\\
-y_i & \ell_i &0
\end{smallmatrix}
\right)
\exp
\left(
\begin{smallmatrix} 
0 & 0 &0\\
\tilde x & 0 &0\\
-\tilde y_i & \tilde \ell_i &0
\end{smallmatrix} 
\right)
\big )  =  
\log \big (
\left(
\begin{smallmatrix}  1&0&0\\ x&1&0
\\- y_i+\frac{1}{2} \ell_ix&\ell_i&1\end{smallmatrix}
\right) \left(
\begin{smallmatrix} 1&0&0\\ \tilde x&1&0
\\ - \tilde y_i+\frac{1}{2} \tilde \ell_i \tilde x& \tilde \ell_i &1
\end{smallmatrix} \right) \big
)
\\
&=\log \left(
\begin{smallmatrix} 1&0&0\\ x+\tilde x&1
&0\\ 
-y_i-
\tilde y_i+
\frac{1}{2} ( \ell_ix+
\ell_i \tilde x+
\tilde \ell_i  \tilde x )&
\ell_i+ \tilde \ell_i&1 
\end{smallmatrix} \right)
= \left( \begin{smallmatrix}  
0&0&0\\
x+\tilde x&0 &0 \\ 
-(y_i+{ \tilde y_i}  +\frac{1}{2}( 
 \tilde \ell_i x -\ell_i \tilde x))  
& \ell_i+{\tilde \ell_i}&0
\end{smallmatrix}
\right).
\end{align*}
This allows us to describe explicitly the action of isotropy symmetries in coordinates.
 \begin{prop*}
 \label{symmetries}
The action of isotropy symmetry $R \in SO(3,\R)$ on $ (x,\ell,y) \in \R^7 \simeq \R \oplus \R^3\oplus \R^3 $ takes form 
\begin{align}
\begin{split} 
(x,\ell,y) \mapsto (x, R \ell, Ry),
\end{split}
\label{action}
\end{align}
where we
 denote $\ell=(\ell_1,\ell_2,\ell_3)^t$ and $y=(y_1,y_2,y_3)^t$.
 \end{prop*}
\begin{proof}
Identifying $T_eN$ with $\fn$, the tangent action $T_oR$ reads as $X \mapsto \Ad_R(X)$ for $X \in \fn$. 
We see from  \eqref{algebry} that each element of $\frak{so}(3,\R)$ exponentiates to a matrix of the form
$$
\left( \begin{smallmatrix} 
1 & 0 &0\\
0 & 1 &0\\
0 & 0 &R
\end{smallmatrix} \right).
$$
Then we compute in coordinates
$$
\log \big( \left( \begin{smallmatrix} 
1 & 0 &0\\
0 & 1 &0\\
0 & 0 &R
\end{smallmatrix} \right)
\exp\left( \begin{smallmatrix} 
0 & 0 &0\\
x & 0 &0\\
-y & \ell &0
\end{smallmatrix} \right)
\left( \begin{smallmatrix} 
1 & 0 &0\\
0 & 1 &0\\
0 & 0 &R^{-1}
\end{smallmatrix} \right) \big)=
\left( \begin{smallmatrix} 
0 & 0 &0\\
x & 0 &0\\
-Ry & R\ell&0
\end{smallmatrix} \right)  
$$
and the formula follows.
\end{proof}

One can see from \eqref{action} that the action of the of $SO(3,\R)$ 
is given by
simultaneous rotations on $\ell_i$ and $y_i$, $i=1,2,3$, while the coordinate $x$ is invariant.  Since all invariants of each rotation in $\R^3$ are multiples of its axis, the fixed points of the symmetry with the axis $(a_1,a_2,a_3)$ form the set
\begin{align} \label{fp}
\{(x, k a_1,k a_2,k a_3,l a_1,l a_2,l a_3): x,k,l \in \R \}.
\end{align}
Finally, there is the following consequence of Proposition \ref{symmetries}.

\begin{cor*}
Set of points that are fixed by some isotropy symmetry is the union of sets \eqref{fp} over all axes $(a_1,a_2,a_3)$ 
\begin{align} \label{cn}
C_\fn = \{(x,\ell,y) \in \R\times\R^3\times\R^3:
\ell \text{ and } y \text{ are linearly dependent}
\}.
\end{align}
\label{corsym2}
\end{cor*}
Let us emphasize that $C_\fn$ is invariant with respect to the action of $SO(3,\R)$ on $N$. Moreover, for any fixed $\ell \in \R^3$, the set $\{ (x,y\ell,t\ell) : (x,y,t) \in \R^3 \} $ is a subgroup.

\section{Local control and geodesics} \label{sec-control}
Let us now focus on the control system (\ref{control},\ref{energy}) related to the sub--Riemannian structure $(N,\N,g)$. 
We use Hamiltonian concepts and we follow here \cite[Sections 7 and 13]{ABB} to find local control for the system. 
Left--invariant vector fields $N_0$, $N_1$,  $N_2$,  $N_3$, 
$N_{01}$, $N_{02}$, $N_{03}$ form a basis of $TN$ and determine left--invariant coordinates on $N$. The corresponding left--invariant coordinates $h_0,h_i$ and $w_i$, $i=1,2,3$ on fibers of $T^*N$ are given by $h_0 (\lambda )=  \lambda(N_0), h_i (\lambda )=  \lambda(N_i) $, 
$w_i (\lambda )=  \lambda(N_{0i})$
for arbitrary $1$--form $\lambda$ on $N$. Thus we can use $(x,\ell_i,y_i,h_0,h_i,w_i)$ as global coordinates on $T^*N$. Then in these coordinates, the corresponding Pontryagin's maximum principle system is as follows. Let us emphasize that geodesics, i.e. admissible curves parametrized by constant speed whose 
 sufficiently small arcs are length minimizers, are exactly projections on $N$ of solutions of this system, \cite{ABB}.

Firstly, we get $\dot w_i=0$ and thus 
$w_i$ are constant for $i=1,2,3$, i.e. we have 
\begin{align} \label{h567}
w_1=K_1,\ \ w_2=K_2,\ \ w_3=K_3
\end{align} 
for suitable constants $K_i$. Then for $h=(h_0,h_1,h_2,h_3)^t$ we get
$
\dot h = - \Omega h
$
for
\begin{align}
\Omega=
\left( \begin{smallmatrix} 
0 & K_1 & K_2 & K_3\\
-K_1 & 0 & 0 & 0\\
-K_2 & 0 & 0 & 0 \\
-K_3 & 0 & 0 & 0
\end{smallmatrix} \right).
\label{matrix}
\end{align}
Solution of the system is given by  
$
h(t)= e^{-t \Omega} h(0)
$, where  $h(0)$ is the initial value of the vector $h$ in the origin. If $K_1=K_2=K_3=0$, then $h(t)=h(0)$ is constant and the geodesic   $(x(t),\ell_i(t),y_i(t))$ is a line in $N$ such that $y_i(t)=0$.   
In next we assume that the vector $(K_1,K_2,K_3)$ is non--zero and 
we denote by  $K=\sqrt{K_1^2+K_2^2+K_3^2}$ its length.

\begin{prop*} \label{vert-p} The general solutions of $\dot h=\Omega h$ satisfying \eqref{h567} for non--zero $K$ take form   
\begin{align} 
\begin{split}\label{h1}
h_0&=K(-C_1\sin(Kt)+C_2\cos(Kt)), \\ 
\left( \begin{smallmatrix} 
h_1\\
h_2\\
h_3 
\end{smallmatrix} \right)&=
(C_1\cos(Kt)+C_2\sin(Kt))
\left( \begin{smallmatrix} 
K_1\\
K_2\\
K_3
\end{smallmatrix} \right)
+\left( \begin{smallmatrix} 
-C_3K_3-C_4K_2\\
C_4K_1\\
C_3K_1
\end{smallmatrix} \right)
\end{split} 
\end{align}
where $C_1,C_2,C_3,C_4$ are real constants. 
\end{prop*}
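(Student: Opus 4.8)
The plan is to solve the linear system $\dot h=-\Om_w h$ explicitly and then read off the stated one--parameter families. Writing out the rows of $\Om_w$ from \eqref{matrix}, the system decouples as
\begin{align*}
\dot h_0=-(K_1h_1+K_2h_2+K_3h_3),\qquad \dot h_i=K_ih_0,\quad i=1,2,3.
\end{align*}
First I would introduce the scalar $g:=K_1h_1+K_2h_2+K_3h_3$, i.e. the component of $(h_1,h_2,h_3)$ along $(K_1,K_2,K_3)$. Differentiating and substituting gives $\dot g=K^2h_0$ and $\dot h_0=-g$, hence $\ddot h_0=-K^2h_0$. Thus $h_0$ is a harmonic oscillation of angular frequency $K$, and I may normalise the two integration constants so that $h_0=K(-C_1\sin(Kt)+C_2\cos(Kt))$; then $g=-\dot h_0=K^2(C_1\cos(Kt)+C_2\sin(Kt))$, which fixes the signs and the factor $K$ appearing in \eqref{h1}.

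Next I would recover the vector part. The equations $\dot h_i=K_ih_0$ say that $\tfrac{d}{dt}(h_1,h_2,h_3)=h_0\,(K_1,K_2,K_3)$ is always proportional to $(K_1,K_2,K_3)$, so the component of $(h_1,h_2,h_3)$ orthogonal to $(K_1,K_2,K_3)$ is conserved; call this constant vector $v$. The parallel component is exactly $g/K^2$ times $(K_1,K_2,K_3)$, whence
\begin{align*}
\begin{pmatrix}h_1\\h_2\\h_3\end{pmatrix}=(C_1\cos(Kt)+C_2\sin(Kt))\begin{pmatrix}K_1\\K_2\\K_3\end{pmatrix}+v,
\end{align*}
which already reproduces the oscillatory term of \eqref{h1}. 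Equivalently, one obtains the same conclusion by computing $e^{-t\Om_w}$ from the Rodrigues--type identity $\Om_w^3=-K^2\Om_w$, giving $e^{-t\Om_w}=I-\tfrac{\sin(Kt)}{K}\Om_w+\tfrac{1-\cos(Kt)}{K^2}\Om_w^2$ and then applying it to $h(0)$.

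It remains to show that the conserved vector $v$, an arbitrary element of the orthogonal complement $(K_1,K_2,K_3)^\perp$, can be written in the stated form $(-C_3K_3-C_4K_2,\,C_4K_1,\,C_3K_1)$. I would note that this expression equals $C_3(-K_3,0,K_1)+C_4(-K_2,K_1,0)$ and that both spanning vectors are orthogonal to $(K_1,K_2,K_3)$, so every such combination does lie in $(K_1,K_2,K_3)^\perp$. The one place the argument needs care is the converse: I expect the main (if modest) obstacle to be checking that these two vectors actually span the two--dimensional space $(K_1,K_2,K_3)^\perp$, so that the parametrisation is onto. This reduces to the determinant
\begin{align*}
\det\begin{pmatrix}K_1&-K_3&-K_2\\K_2&0&K_1\\K_3&K_1&0\end{pmatrix}=-K_1K^2,
\end{align*}
which is nonzero precisely when $K_1\neq0$. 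For $K_1\neq0$ the four constants $C_1,C_2,C_3,C_4$ are then in bijection with the initial data $h(0)$, so \eqref{h1} is indeed the general solution. The degenerate locus $K_1=0$ is disposed of by the rotational symmetry in the indices $1,2,3$, which acts on $(K_1,K_2,K_3)$ and lets one assume $K_1\neq0$ after a relabelling — precisely the $\frak{so}(3)$--action exploited later in the paper.
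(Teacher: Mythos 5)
Your main line of argument is correct, and it takes a genuinely different route from the paper. The paper solves $\dot h=-\Omega_w h$ by spectral analysis of the matrix \eqref{matrix}: it computes the eigenvalues $\pm iK,0,0$, chooses an adapted basis (real and imaginary part of the complex eigenvector, plus two kernel vectors), writes $e^{-t\Omega_w}$ as a rotation block in that basis, and expands $h(t)=e^{-t\Omega_w}h(0)$. You instead decouple the ODE directly: the scalar $g=K_1h_1+K_2h_2+K_3h_3$ gives $\ddot h_0=-K^2h_0$, and the observation that $\tfrac{d}{dt}(h_1,h_2,h_3)$ is always parallel to $(K_1,K_2,K_3)$ shows that the orthogonal component is a conserved vector $v$. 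This is more elementary (no matrix exponential or eigenvector computation needed), it makes the origin of each term in \eqref{h1} transparent, and your Rodrigues-type identity $\Omega_w^3=-K^2\Omega_w$ is a clean alternative packaging of what the paper's eigenbasis computation achieves. In essence both proofs parametrize solutions by the oscillation constants $C_1,C_2$ plus a constant vector in the kernel direction; the difference is how that structure is extracted.

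Concerning the locus $K_1=0$: here you have found a gap in the \emph{paper}, not created one in your own argument. The paper's proof takes $(0,-K_3,0,K_1)^t$ and $(0,-K_2,K_1,0)^t$ as a ``basis'' of the zero-eigenspace, and exactly as your determinant $-K_1K^2$ shows, these vectors become linearly dependent when $K_1=0$, so the paper's proof (and the literal statement of the proposition) silently fails there. Be aware, though, that your relabelling remark does not rescue the statement \emph{verbatim}: for $(K_1,K_2,K_3)=(0,1,0)$, formula \eqref{h1} forces $h_3\equiv 0$, so the genuine solution with constant $h_3\neq 0$ is never of the stated form in these coordinates; permuting indices yields the general solution only in permuted form, not in the form \eqref{h1}. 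What your argument really proves is the correct invariant statement — $(h_1,h_2,h_3)$ is the oscillation along $(K_1,K_2,K_3)$ plus an arbitrary constant vector $v\perp(K_1,K_2,K_3)$ — and that the proposition as printed holds whenever $K_1\neq 0$, hence always up to the $SO(3)$-action. You should present the degenerate case as a small erratum to the proposition (or restate the constant term using any basis of $(K_1,K_2,K_3)^\perp$), rather than as a case your relabelling disposes of.
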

\begin{proof}

The solution of the system  is given by exponential of the matrix $\Omega$ from  \eqref{matrix}.
We need to  analyze its eigenvalues and eigenvectors.  
It follows that there are (complex conjugated) imaginary eigenvalues $\pm iK$  both of multiplicity one and the eigenvalue $0$ of multiplicity two. The corresponding eigenspace of $iK$ is generated by complex eigenvector $v$ that decomposes into real and complex component as
$\Re(v)=\left( 
0 ,
K_1,
K_2 ,
K_3
\right)^t$ and $ 
\Im(v)=\left(  
K,
0,
0,
0
\right)^t
.$
In the basis formed by these two vectors together with any basis of the  
two--dimensional eigenspace corresponding
to the eigenvalue $0$, the matrix $\Omega$ has zeros at all positions except positions  $(\Omega)_{12}=-(\Omega)_{21}=K$. Then we get in this eigenvector basis    
\begin{align}
e^{-t\Omega}  =
\left( \begin{smallmatrix}
\cos(Kt) & \sin(Kt) &  0 & 0 \\
 -\sin(Kt) & \cos(Kt) &  0 & 0 \\
 0&0& 1 &0 \\
 0& 0& 0&1
\end{smallmatrix} \right) .
\end{align}
For the choice of the basis of the eigenspace corresponding
to the eigenvalue $0$ given as $\left( 0 ,-K_3,0 ,K_1 \right)^t$ and 
$\left( 0 ,-K_2,K_1,0 \right)^t,$
the solution can be written as the combination 
\begin{align*}
\left( \begin{smallmatrix} 
h_0\\
h_1\\
h_2\\
h_3 
\end{smallmatrix} \right)&
=
(C_1\cos(Kt)+C_2\sin(Kt))
\left( \begin{smallmatrix} 
0 \\
K_1\\
K_2\\
K_3
\end{smallmatrix} \right)+
(-C_1\sin(Kt)+C_2\cos(Kt))\left( \begin{smallmatrix} 
K \\
0\\
0\\
0
\end{smallmatrix} \right)
\\
&+C_3\left( \begin{smallmatrix} 
0 \\
-K_3\\
0\\
K_1
\end{smallmatrix} \right)+
C_4\left( \begin{smallmatrix} 
0 \\
-K_2\\
K_1\\
0
\end{smallmatrix} \right)
\end{align*}
with coefficients $C_1,C_2,C_3,C_4$. Then the formula $(\ref{h1})$ follows.
\end{proof}

Let us emphasize that the choice $C_1=C_2=0$ gives constant solutions that are not relevant as control functions. Thus we assume that at least one of the constants $C_1,C_2$ is non--zero.

The base system for $x,y,\ell$ then takes the explicit form
\begin{align}
\begin{split} \label{xl}
\dot x &= h_0,\ \   \dot \ell_1 = h_1, \ \
\dot \ell_2 = h_2, \ \  \dot \ell_3 = h_3,\\
\dot y_1&= {1 \over 2}(xh_1-h_0\ell_1), \ \  \dot y_2= {1 \over 2}(xh_2-h_0\ell_2),\ \  \dot y_3= {1 \over 2}(xh_3-h_0\ell_3).
\end{split}
\end{align}
We are interested in solutions emanating from the origin, i.e. we impose the initial condition $x(0)=0,\ell_i(0)=0,y_i(0)=0$, $i=1,2,3$. Indeed, we can find geodesics starting at different point of $N$ using the action of suitable transvection, see Proposition \ref{symmetries}.

\begin{prop*} \label{geodesics}
Arc--length sub--Riemannian geodesics 
on Carnot group $N$ satisfying the initial condition $x(0)=0,\ell_i(0)=0,y_i(0)=0$, $i=1,2,3$  are either lines of the form 
 \begin{align} 
 (x, \ell_1,\ell_2,\ell_3,y_1,y_2,y_3 )^t =  
 (C_1t , C_2 t,C_3 t,C_4 t,0,0,0)^t
 \label{line} 
 \end{align}
 parametrized by constants $C_1, C_2,C_3, C_4$ satisfying $C_1^2+C_2^2+C_3^2+C_4^2=1$, or they are curves given by equations 
\begin{align} \label{x}
x&=C_1\cos(Kt)+C_2\sin(Kt)-C_1,
\\
\label{xl2}
\left( \begin{smallmatrix} 
\ell_1\\
\ell_2\\
\ell_3
\end{smallmatrix} \right)& =
{1 \over K}
(C_1\sin(Kt)-C_2\cos(Kt)+C_2)
\left( \begin{smallmatrix} 
K_1\\
K_2\\
K_3
\end{smallmatrix} \right)
+t\left( \begin{smallmatrix} 
-C_3K_3-C_4K_2\\
C_4K_1\\
C_3K_1
\end{smallmatrix} \right), 
\\
\begin{split} 
\label{y}
\left( \begin{smallmatrix} 
y_1\\
y_2\\
y_3
\end{smallmatrix}\right)&={1 \over 2K}(C_1^2+C_2^2)(tK-\sin(Kt))\left( \begin{smallmatrix} 
K_1\\
K_2\\
K_3
\end{smallmatrix} \right)+
{1 \over 2K}((2C_1-C_2Kt)\sin(Kt)\\ &-(C_1Kt+2C_2 )\cos(Kt) +2C_2-tC_1K)\left( \begin{smallmatrix} 
-C_3K_3-C_4K_2\\
C_4K_1\\
C_3K_1
\end{smallmatrix} \right)
,
\end{split} 
\end{align}
 parameterized by constants $C_1, C_2,C_3, C_4$ and  $K_1, K_2,K_3$ satisfying
\begin{align} 
\begin{split} \label{level}
K^2(C_1^2+ C_2^2) +(C_3K_3+C_4K_2)^2+C_4^2K_1^2+ C_3^2K_1^2 =1
\end{split}.
\end{align} 

\label{geo}
\end{prop*}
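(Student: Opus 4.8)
The plan is to integrate the base system \eqref{xl} explicitly, feeding in the fiber quantities $h_0,h_1,h_2,h_3$ and imposing the initial condition at the origin. The two alternatives in the statement are exactly the cases $K=0$ and $K\neq 0$ already separated before Proposition \ref{vert-p}, and in both the normalising condition (namely $C_1^2+C_2^2+C_3^2+C_4^2=1$, respectively \eqref{level}) will turn out to be nothing but the constant--speed condition $2H=h_0^2+h_1^2+h_2^2+h_3^2=1$.

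When $K=0$ the vector $h$ is constant; writing its components as $C_1,C_2,C_3,C_4$ and integrating $\dot x=h_0$, $\dot\ell_i=h_i$ from the origin gives the linear expressions in \eqref{line}. Since each pair $(x,\ell_i)$ is then proportional to $t$ with the matching constants, the right--hand side $\tfrac12(xh_i-h_0\ell_i)$ of the $y_i$--equation vanishes identically, so $y_i\equiv 0$; the normalisation is $2H=1$ evaluated on the constant $h$.

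For $K\neq 0$ I would integrate the three blocks of \eqref{xl} in turn. The equation $\dot x=h_0$ integrates the formula for $h_0$ of Proposition \ref{vert-p} to $C_1\cos(Kt)+C_2\sin(Kt)$ up to a constant, which $x(0)=0$ forces to equal $-C_1$, giving \eqref{x}. Likewise $\dot\ell_i=h_i$ sends the oscillating part of $(h_1,h_2,h_3)^t$ to $\tfrac1K(C_1\sin(Kt)-C_2\cos(Kt)+C_2)(K_1,K_2,K_3)^t$ and its constant part to a term linear in $t$, yielding \eqref{xl2}. The substantial step is the last block $\dot y_i=\tfrac12(xh_i-h_0\ell_i)$: inserting the expressions for $x,\ell_i,h_0,h_i$ produces an integrand built from $\cos^2(Kt)$, $\sin^2(Kt)$, $\sin(Kt)\cos(Kt)$ and from products of $t$ with trigonometric functions, and integrating term by term with $y_i(0)=0$ and sorting by the two structure vectors $(K_1,K_2,K_3)^t$ and $(-C_3K_3-C_4K_2,C_4K_1,C_3K_1)^t$ gives \eqref{y}. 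This is where I expect the bookkeeping to be heaviest, and the one simplification I would lean on is that the second structure vector is orthogonal to $(K_1,K_2,K_3)^t$, which annihilates several cross terms.

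It remains to identify \eqref{level} as the unit--speed condition. Since $H$ is constant along a normal extremal, I can evaluate $2H=h_0^2+h_1^2+h_2^2+h_3^2$ at any $t$; using the same orthogonality, the cross terms drop out and $\cos^2(Kt)+\sin^2(Kt)=1$ collapses the oscillating contribution to $K^2(C_1^2+C_2^2)$, leaving exactly $K^2(C_1^2+C_2^2)+(C_3K_3+C_4K_2)^2+C_4^2K_1^2+C_3^2K_1^2$, the left--hand side of \eqref{level}. Setting this equal to $1$ finishes the proof.
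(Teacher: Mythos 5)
Your proposal is correct and follows essentially the same route as the paper: split into the cases $K=0$ and $K\neq 0$, integrate the base system \eqref{xl} directly using the fiber solutions of Proposition \ref{vert-p} with the initial condition at the origin, and identify the parameter constraints as the level set $H=\tfrac12$ (where the orthogonality of $(K_1,K_2,K_3)^t$ and $(-C_3K_3-C_4K_2,C_4K_1,C_3K_1)^t$ indeed collapses the cross terms). The only nuance is that in the $y$--integration itself no orthogonality is needed — the equation is linear in the two structure vectors, so sorting coefficients suffices — but this does not affect the validity of the argument.
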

\begin{proof}
Let us firstly remind that parametrization of geodesics is encoded in level sets of the Hamiltonian of the system (\ref{control},\ref{energy}) that is $H=\frac12(h_0^2+h_1^2+h_2^2+h_3^2)$ and arc length parametrization correspond to $H=\frac12$, \cite{ABB}.

The line \eqref{line} corresponds to $K=0$ and thus $h(t)=h(0)$ is constant  and  defines the vector of constants $(C_1,C_2,C_3,C_4)$. The length of this vector is equal to one on the level set $\frac{1}{2}$.  If $K \neq 0$ we obtain $x, \ell_1,\ell_2,\ell_3$ by direct integration of the first part of \eqref{xl} and involving the initial condition, where $h(t)$ is given by \eqref{h1}. 
Substituting the results into the second part of \eqref{xl} we get  $y_1,y_2,y_3$ by integration. 
The solutions define family of curves starting at the origin such that  the Hamiltonian $H$ is constant along them. 
Unit--speed geodesics are contained in
the level set $h_0^2+h_1^2+h_2^2+h_3^2=1$. According to Proposition \ref{vert-p} this restriction reads  as  \eqref{level}.
\end{proof}

\section {Moduli space and geodesics } \label{sec4}

Each choice of coefficients $C_1,C_2,C_3,C_4 \in \mathbb R$ and $K_1,K_2,K_3 \in \mathbb R $ 
that satisfy \eqref{level} gives a geodesic $(x(t),\ell(t),y(t))$ as described in the Proposition \ref{geo}. According to \eqref{xl} and \eqref{y}, 
$\ell(t)$ and $y(t)$ are linear combinations of the vectors
$$
z_1=\left( \begin{smallmatrix} 
K_1\\
K_2\\
K_3
\end{smallmatrix} \right)
,\ \ \ z_2=\left( \begin{smallmatrix} 
-C_3K_3-C_4K_2\\
C_4K_1\\
C_3K_1
\end{smallmatrix} \right)
$$
for any $t >0$. The vectors 
$z_1$ and $z_2$ are orthogonal with respect to the Euclidean metric on $\mathbb R^3$ by definition. 
We know from Proposition \ref{symmetries} that for each $R \in SO(3,\R)$ the map 
\begin{align*}
\begin{split} 
(x,\ell,y) \mapsto (x, R \ell, Ry)
\end{split}
\end{align*}
maps geodesics starting at the origin to geodesics starting at the origin.

Altogether, there always is an orthogonal matrix $R \in SO(3,\R)$ that aligns vectors $z_1$ and $z_2$  with the suitable multiples of the first two vectors of the standard basis of $\R^3$. Thus we get
$$ z_1 = R
\left( \begin{smallmatrix} 
K\\
0\\
0
\end{smallmatrix} \right)
,\ \ \ z_2 = R\left( \begin{smallmatrix} 
0\\
C\\
0
\end{smallmatrix} \right), 
$$
where $K=|z_1|$ is the length of $z_1$ and we denote  $C=|z_2|$ the length of $z_2$.
This matrix defines a representative of the geodesic class 
$$(x(t),\bar \ell(t),\bar y(t)) = (x(t),R^t\ell(t),R^ty(t)).$$
The equations for this representative geodesics simplify remarkably. Namely 
\begin{align}
\begin{split} 
x(\tau) &= C_1 (\cos \tau - 1) + C_2 \sin \tau, \\
\bar \ell_1(\tau) &= C_1 \sin \tau + C_2 (1-\cos \tau ), \\
\bar \ell_2 (\tau)&= \bar C_3 \tau, \\
\bar \ell_3 (\tau)&= 0, \\
\bar y_1 (\tau)&= \frac12 (C_1^2+C_2^2)(\tau - \sin \tau), \\
\bar y_2 (\tau)&= \frac12 \bar C_3 \left[C_1 (2 \sin \tau  - \tau \cos \tau - \tau   )+ 
C_2 (2-2 \cos \tau  - \tau \sin \tau  ) \right], \\
\bar y_3 (\tau)&= 0,
\end{split}
\label{newgeo}
\end{align}
where $\tau=Kt$ and $\bar C_3 = C/K$.
The level set equation \eqref{level} reads as 
\begin{align} 
K^2(C_1^2+C_2^2+\bar C_3^2) = 1
\label{kc}
\end{align}
and determines $K>0$ uniquely.

The moduli space $N/SO(3,\R)$ defined by the action \eqref{action} of $SO(3,\R)$ on $N \cong \mathbb R^7$ is determined by natural invariants 
$ x, (\ell,\ell), (\ell,y), (y,y),$ 
where $(\:,\:)$ stands for the Euclidean scalar product on $\mathbb R^3$. 
\begin{prop*} \label{prop4}
Each geodesic starting at the origin defines a curve in the moduli space $N/SO(3,\R)$ 
given by a curve in invariants 
\begin{align}
\begin{split}
x &= C_1 (\cos \tau - 1) + C_2 \sin \tau, \\
(\ell,\ell) &= (C_1 \sin \tau + C_2 (1-\cos \tau ))^2+ (\bar C_3 \tau)^2, \\
(\ell,y) &=\frac12 (C_1^2+C_2^2)
(C_1 \sin \tau + C_2 (1-\cos \tau ) )(\tau - \cos \tau) \\
&+\bar C_3^2 \tau  [C_1 (2 \sin \tau  - \tau \cos \tau - \tau   )+ 
C_2 (2-2 \cos \tau  - \tau \sin \tau  ) ],\\
(y,y) &=\frac14 (C_1^2+C_2^2)^2(\tau - \cos \tau)^2+ \bar C_3^2 [C_1 (2 \sin \tau  - \tau \cos \tau - \tau   )\\
&+ C_2 (2-2 \cos \tau  - \tau \sin \tau  ) ]^2.
\end{split}
\label{inv} 
\end{align}
\label{invp}
\end{prop*}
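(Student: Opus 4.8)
The plan is to exploit the fact that the four quantities $x$, $(\ell,\ell)$, $(\ell,y)$, $(y,y)$ are constant on $SO(3)$-orbits, so that the curve they trace out in $N/SO(3)$ can be computed on any convenient representative of the orbit of a given geodesic rather than on the geodesic itself. First I would record that under the action \eqref{action}, $(x,\ell,y)\mapsto(x,R\ell,Ry)$ with $R\in SO(3)$, the coordinate $x$ is fixed and each Euclidean scalar product is preserved, since $(R\ell,R\ell)=(\ell,\ell)$, $(R\ell,Ry)=(\ell,y)$ and $(Ry,Ry)=(y,y)$ for orthogonal $R$. Hence these four functions descend to $N/SO(3)$, and their values along a geodesic agree with their values along any rotated copy of that geodesic.

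The point that makes the computation tractable is that the aligning rotation $R$ constructed before the statement is a single fixed element of $SO(3)$: the vectors $z_1$ and $z_2$ depend only on the constants $C_1,\dots,C_4,K_1,K_2,K_3$ and not on $t$, so one and the same $R$ carries the entire geodesic $(x(t),\ell(t),y(t))$ of Proposition \ref{geo} to the representative curve $(x(\tau),\bar\ell(\tau),\bar y(\tau))$ of \eqref{newgeo}. By Corollary \ref{corsym} this representative is again a geodesic through the origin, and by the invariance just noted it carries exactly the same curve of invariants. Thus it suffices to evaluate $x$, $(\ell,\ell)$, $(\ell,y)$, $(y,y)$ on \eqref{newgeo}.

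The remaining step is the substitution. Since the representative satisfies $\bar\ell_3(\tau)=\bar y_3(\tau)=0$, each scalar product collapses to a two-term sum in the first two components, $(\ell,\ell)=\bar\ell_1^2+\bar\ell_2^2$, $(\ell,y)=\bar\ell_1\bar y_1+\bar\ell_2\bar y_2$ and $(y,y)=\bar y_1^2+\bar y_2^2$, while $x$ is read off directly. Plugging in the closed forms of \eqref{newgeo} and collecting terms then yields \eqref{inv}. I do not expect a genuine obstacle here: the statement is essentially bookkeeping once the $SO(3)$-invariance of the four quantities is recognized, and the only subtlety worth flagging is the $t$-independence of $R$ --- were $R$ to vary with $t$, the representative curve would not lie in a single orbit and the invariants could not be transported from it. Everything else reduces to expanding the trigonometric expressions and simplifying.
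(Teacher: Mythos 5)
Your proposal is correct and is essentially the paper's own proof: the paper disposes of the statement with the one-line remark that it ``follows directly from \eqref{newgeo}'', and you have merely made explicit the two facts that justify this --- the $SO(3)$-invariance of $x$, $(\ell,\ell)$, $(\ell,y)$, $(y,y)$ and the $t$-independence of the aligning rotation $R$ --- before performing the same substitution into the representative geodesic. One side remark: direct substitution produces the factor $(\tau-\sin\tau)$ coming from $\bar y_1$ in \eqref{newgeo}, so the $(\tau-\cos\tau)$ printed in \eqref{inv} is a typo in the paper's statement, not a defect of your argument.
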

\begin{proof}
Follows directly from \eqref{newgeo}.
\end{proof}

Let us recall that the subgroup $C_{\fn}$, defined by \eqref{cn}, consists of points in $N$ that are stabilized by some non--trivial $R \in SO(3,\R)$ for the action \eqref{action}.
Note the similarity of $C_\fn$ to the set $P_3$ from nilpotent $(3,6)$ sub--Riemannian problem which is known to be the set where geodesics starting at the origin lose optimality, \cite{mya1}. For any point of $P_3$ there exists a one--parameter family of geodesics of equal length intersecting at this point.  However, the situation in our nilpotent $(4,7)$ problem is very different.

\begin{thm*} \label{thm1}
	Sub--Riemannian geodesics starting at the origin either do not intersect $C_\fn$ or they lie in $C_\fn$ for all times. 
\end{thm*}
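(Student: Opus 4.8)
The plan is to exploit the $SO(3)$--symmetry to pass to the normal form \eqref{newgeo} and then reduce membership in $C_\fn$ to the vanishing of a single scalar function of $\tau$. First I would note that, by the Remark following Corollary \ref{corsym2}, the set $C_\fn$ is invariant under the action \eqref{action}; since that action rotates $\ell$ and $y$ by the \emph{same} $R\in SO(3)$, it preserves collinearity of the pair $(\ell,y)$. Hence a geodesic $(x,\ell,y)$ lies in $C_\fn$ at a given time if and only if its representative $(x,\bar\ell,\bar y)$ from Proposition \ref{prop4} does, so it suffices to analyze \eqref{newgeo}. For the representative one has $\bar\ell=(\bar\ell_1,\bar\ell_2,0)$ and $\bar y=(\bar y_1,\bar y_2,0)$, both lying in the coordinate plane, so collinearity is equivalent to the vanishing of the single determinant
\[
D(\tau)=\bar\ell_1\bar y_2-\bar\ell_2\bar y_1 .
\]

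Next I would substitute \eqref{newgeo} and factor $D(\tau)=\bar C_3\,F(\tau)$, where $F$ no longer contains the factor $\bar C_3$. This splits the argument into two cases. If $\bar C_3=0$ (which includes the line geodesics \eqref{line}, for which $y\equiv 0$), then $\bar\ell_2=\bar y_2=0$, so $D\equiv 0$ and the whole geodesic lies in $C_\fn$; this is the second alternative. If $\bar C_3\neq0$, then $D(\tau)=0$ is equivalent to $F(\tau)=0$, and the theorem reduces to showing that $F(\tau)\neq0$ for every $\tau>0$ and every admissible choice of $C_1,C_2$ (recall $(C_1,C_2)\neq(0,0)$, since $C_1=C_2=0$ gives no genuine control).

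The key observation is that $F$ is a quadratic form in $(C_1,C_2)$,
\[
F(\tau)=A(\tau)\,C_1^2+B(\tau)\,C_1C_2+C(\tau)\,C_2^2,
\]
with explicit trigonometric coefficients $A,B,C$ read off from \eqref{newgeo}. Such a form vanishes for some $(C_1,C_2)\neq(0,0)$ precisely when its discriminant is nonnegative; therefore the statement for $\bar C_3\neq0$ is equivalent to the single inequality
\[
\Delta(\tau):=4A(\tau)C(\tau)-B(\tau)^2>0\qquad\text{for all }\tau>0 .
\]
I expect this inequality to be the main obstacle, and it is genuinely delicate near $\tau=0$: a Taylor expansion gives $A=-\tfrac{2}{45}\tau^6+\cdots$, $B=\tfrac16\tau^5+\cdots$, $C=-\tfrac16\tau^4+\cdots$, so that the two competing terms in $\Delta$ nearly cancel and leave only $\Delta=\tfrac{1}{540}\tau^{10}+\cdots$. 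To establish $\Delta>0$ globally I would split the range of $\tau$: on a fixed interval $(0,\tau_0]$ I would use quantitative Taylor estimates with explicit remainder control to keep the positive leading term dominant, while for large $\tau$ the crude bounds $|\sin|,|\cos|\le 1$ make the $-\tau^2$ contributions in $A$ and $C$ dominate. Once $\Delta>0$ is known for all $\tau>0$, it follows that $AC>0$, so $A$ never vanishes; since $A<0$ near $0$, we get $A<0$ throughout, the form is negative definite, and $F(\tau)<0$ for all $\tau>0$. This settles the case $\bar C_3\neq0$ and completes the proof.
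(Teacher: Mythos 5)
Your setup coincides with the paper's proof step for step: reduce to the $SO(3)$--representative \eqref{newgeo} using the invariance of $C_\fn$, characterize membership in $C_\fn$ by the vanishing of $\bar\ell_1\bar y_2-\bar\ell_2\bar y_1$, pull out the factor $\bar C_3$, treat $\bar C_3=0$ as the ``stays in $C_\fn$ forever'' alternative, and for $\bar C_3\neq 0$ reduce the theorem to definiteness of a quadratic form in $(C_1,C_2)$, i.e.\ to a discriminant inequality. All of this is correct, and your Taylor data check out: your $\Delta=4AC-B^2$ is exactly $-d$ for the paper's discriminant $d$, with $\Delta=\tfrac{1}{540}\tau^{10}+\cdots$ as you say. (Your $B$ is the full cross--coefficient; the paper's formula \eqref{det} writes $2d_{12}$ but then lists a $d_{12}$ equal to the full coefficient --- a harmless slip on their side, and your convention is the consistent one.)

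The gap is that the one hard step, $\Delta(\tau)>0$ for all $\tau>0$, is left as a plan, and the plan as described would not close. What makes the paper's proof work is an algebraic factorization you do not have: $\Delta=4\tau(\tau-\sin\tau)\,f(\tau)$ with $f(\tau)=\tau^2+\tau\sin\tau+4\cos\tau-4$. Since $\tau(\tau-\sin\tau)>0$, everything reduces to $f>0$, whose leading term $\tau^6/360$ arises with no cancellation; there the Taylor bound $f(\tau)>-\tau^6(\tau^2-14)/5040$ holds on $(0,\sqrt{14})$, the crude bound $f(\tau)>\tau^2-\tau-8$ holds on $\bigl(\tfrac{1+\sqrt{33}}{2},\infty\bigr)$, and the two intervals overlap. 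By contrast, your unfactored $\Delta$ is a difference of two nearly equal quantities, $4AC\approx\tfrac{16}{540}\tau^{10}$ versus $B^2\approx\tfrac{15}{540}\tau^{10}$; relatively, $\Delta=\tfrac{\tau^{10}}{540}\bigl(1-\tfrac{3}{35}\tau^2+\cdots\bigr)$, so a low-order Taylor lower bound already fails at $\tau\approx\sqrt{35/3}\approx 3.4$, while your crude bounds $|\sin|,|\cos|\le 1$ give $4AC\ge 4(\tau^2-\tau-2)(\tau^2-\tau-8)>(2\tau+8)^2\ge B^2$ only from $\tau\approx 4.3$ onward (sharpening to $|\sin\tau\cos\tau|\le\tfrac12$ still only reaches down to about $\tau\approx 4$). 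The window roughly $(3.4,\,4.3)$ is covered by neither estimate, and it is genuinely delicate there: at $\tau=4.25$ the true value of $\Delta$ is only about $20\%$ of its leading term $\tau^{10}/540$, so ``keeping the positive leading term dominant'' is impossible --- the higher-order terms are of comparable size, and any remainder control must beat the $16$-versus-$15$ near-cancellation. As written, your two regimes do not meet; to finish you need either the factorization (which is precisely the paper's key computation) or substantially sharper intermediate-range estimates than the ones you propose.
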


\begin{proof}
Suppose there is an intersection of the set $C_\fn$ with a sub--Riemannian geodesic $(x(t),\ell(t),y(t))$ emanating from the origin.  
So there is a point of intersection  $(x,\bar \ell,\bar y)$ of the set $C_\fn$ with a sub--Riemannian geodesic $(x(t),\bar \ell(t),\bar y(t))$ since 
 $C_{\mathfrak  n}$  is invariant with respect to the action \eqref{action} of $SO(3,
R)$. 
At this intersection  $(x,\bar \ell,\bar y)$, the collinearity of $\bar \ell$ and $\bar y$ is described by vanishing of the determinant 
$$\bar \ell_1 \bar y_2 - \bar \ell_2 \bar y_1  = 0.$$
The  geodesics are given by equations \eqref{newgeo} and the determinant can be written explicitly as $\frac12$--multiple of  
\begin{align}
\bar C_3 (d_{11}C_1^2+2d_{12}C_1C_2+d_{22} C_2^2),
\label{det} 
\end{align}
where 
\begin{align*}
d_{11} &=-{\tau}^{2}-\tau \sin \tau \cos  \tau
		 -2\, \cos^2  \tau +2 ,\\
d_{12} &=-2\sin  \tau \left( 2\,\cos \tau -2
		+\tau \sin \tau  \right), \\
d_{22} & = 2\cos^2 \tau + 
   \tau  \cos \tau \sin \tau  -4  \cos \tau   -{\tau}^{2}+2.
\end{align*}
We show 
that the function in the bracket of \eqref{det} is never zero (unless $C_1=C_2=0$, which is irrelevant)  by showing that its discriminant $d$ of this quadratic equation is negative for all positive times. This implies that the colinearity condition \eqref{det} is equivalent to  $\bar C_3=0$. Then $\bar \ell_2 (\tau)=\bar y_2(\tau) =0$ by \eqref{newgeo} and thus geodesic  
$(x(\tau),\bar \ell(\tau),\bar y(\tau))$ belongs to $C_{\fn}$ for all $\tau >0$. 

To show that the discriminant $d$  is negative for all positive times we compute 
\begin{align*}
d = -4 \tau(\tau-\sin\tau)(\tau^2+\tau\sin\tau+4\cos\tau-4),
\end{align*}
hence it is sufficient to prove 
\begin{align*}
f(\tau) = \tau^2+\tau\sin\tau+4\cos\tau-4 >0
\end{align*}
for all positive times $\tau$. This can be done by combining "local" and "global" estimations of this function. The local estimation is obtained by the estimation of goniometric functions $\sin\tau, \cos\tau$ by Taylor series. By evaluating the Taylor series of $f$ in zero, we see we need to use the Taylor polynomial of degree seven and six, respectively.
Then we get the estimation 
\begin{align*}
f(\tau)>-\frac{\tau^6(\tau^2-14)}{5040}
\end{align*}
that guarantees positivity of the function $f$ on the interval $(0,\sqrt{14})$. On the other hand, the inequalities $\sin\tau,\cos\tau\geq -1$ yield a global estimation 
 \begin{align*}
 f(\tau)>\tau^2-\tau-8
 \end{align*}
 that guarantees positivity of the function $f$ on the interval $(\tfrac{1+\sqrt{33}}{2},\infty)$. The two intervals overlap and thus $f$ is positive for all $\tau>0$.
 \end{proof}
In the Figure \eqref{omez} we present both local and global estimation of $f(\tau)$.
 \begin{figure}[h]
 \begin{center}
 \includegraphics[height=40mm]{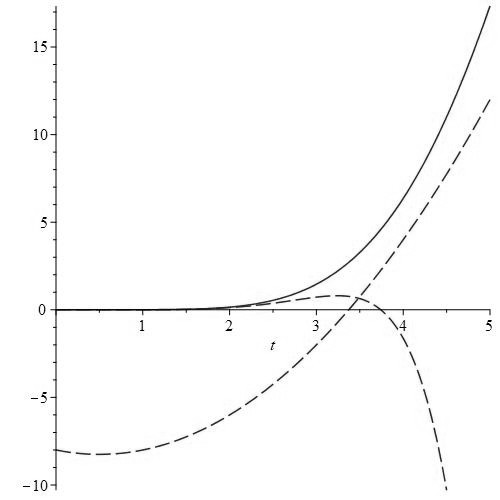}
 \caption{Estimation of $f(\tau)$}
 \label{omez}
 \end{center}
 \end{figure}

\begin{rem*}
The positivity of function $f(\tau)$ from the proof above can be shown alternatively  by proving the positivity of its derivative. The alternative proof can be found in Lemma 3.1. for $\tau=2\theta$ of \cite{talianky}, where authors discuss free $2$--step Carnot group of filtration $(3,6)$.
\end{rem*} 

Let us finally study properties of geodesics contained in $C_{\fn }$.
According to \eqref{det}, this happens if and only if  $\bar C_3=0$. 
Then the non--zero parts of geodesics \eqref{newgeo} are   
\begin{align}
\begin{split} 
x(t) &= C_1 (\cos (Kt) - 1) + C_2 \sin (Kt), \\
\bar \ell_1(t) &= C_1 \sin (Kt) + C_2 (1-\cos (Kt)), \\
\bar y_1 (t)&= (C_1^2+C_2^2)((Kt) - \sin(Kt)), \\
\end{split}
\label{newgeoH}
\end{align}
and level set condition \eqref{level} reads as
$$ {K^2}(C_1^2+C_2^2)= 1.$$
We show that these geodesics are preimages of   geodesics in   Heisenberg geometry and their optimality is well known, \cite{ABB,hzn, mope}. 
Thus, we get the following statement.

\begin{thm*} \label{thm2}
The vertical set $ \{ (0,0,y) \in C_{\fn} :  y \in \mathbb R^3 \} $ is the set where the geodesics in $C_{\fn}$ starting at the origin lose their optimality. These points are Maxwell points and 
for geodesics defined by parameters $C_1$ and $C_2$    
the cut time is $$t_{cut} = 2\pi \sqrt{ C_1^2+C_2^2}.$$
\end{thm*}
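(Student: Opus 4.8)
The plan is to establish an explicit isomorphism between the geodesics \eqref{newgeoH} in $C_{\fn}$ and the well-understood sub-Riemannian geodesics on the Heisenberg group, and then simply transport the known cut-time result. The key observation is that when $\bar C_3 = 0$, the surviving coordinates are $(x, \bar\ell_1, \bar y_1)$, governed by \eqref{newgeoH}, and these equations have exactly the form of the classical Heisenberg geodesic equations. So first I would make this correspondence precise: I would identify the three nonzero fields $N_0, N_1, N_{01}$ restricted to the subgroup generated by them, verify that $[N_0,N_1]=N_{01}$ is the only nonvanishing bracket (immediate from Table \ref{tab}), and conclude that this subgroup, with the induced metric making $N_0, N_1$ orthonormal, is precisely the $3$-dimensional Heisenberg group $H^1$ with its standard sub-Riemannian structure.

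Next I would read off the Heisenberg parameters from \eqref{newgeoH}. Writing $\rho = \sqrt{C_1^2+C_2^2}$, the level-set condition becomes $K^2\rho^2 = 1$, i.e. $K = 1/\rho$. In the standard normalization of the Heisenberg problem (as in \cite{ABB}), a geodesic of unit speed with angular parameter (vertical momentum) $K$ is periodic in the horizontal $(x,\bar\ell_1)$-plane with period $2\pi/K$, and it is classical that such a geodesic remains optimal exactly up to its first return, so $t_{cut} = 2\pi/K$. Substituting $K = 1/\rho$ yields
\begin{align*}
t_{cut} = 2\pi/K = 2\pi\rho = 2\pi\sqrt{C_1^2+C_2^2},
\end{align*}
which is the claimed formula. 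I would also note that at $t = t_{cut}$ the curve \eqref{newgeoH} returns the horizontal coordinates $x$ and $\bar\ell_1$ to $0$ (since $\cos(K t_{cut}) = 1$, $\sin(K t_{cut}) = 0$) while $\bar y_1(t_{cut}) = \rho^2 (K t_{cut}) = \rho^2 \cdot 2\pi/\rho^2 \cdot \rho$, landing on the vertical axis; this identifies the endpoint as lying in the set $\{(0,0,y)\}$ and, by the $SO(3)$-invariance of Corollary \ref{corsym}, explains why rotating the representative geodesic produces a one-parameter family of equal-length geodesics meeting there, so these are genuinely Maxwell points.

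**The main obstacle** I anticipate is not the computation but the rigor of the reduction: one must justify that optimality in the subgroup $C_{\fn}$ coincides with optimality in the ambient group $N$. A geodesic contained in $C_{\fn}$ is a priori only a local minimizer among curves in $N$; I need that the length-minimizing problem restricted to the Heisenberg subgroup gives the same cut-time. The clean way to handle this is to invoke that an isometric embedding of sub-Riemannian manifolds that is also a subgroup inclusion carries minimizers to minimizers, combined with the fact — already used in the excerpt via the absence of strict abnormal extremals for $2$-step Carnot groups — that these geodesics are normal extremals whose optimality is controlled by conjugate and Maxwell points. So the final step is to argue that the first conjugate/Maxwell point of the Heisenberg geodesic is also the first such point in $N$; the $SO(3)$-symmetry furnishes the Maxwell point at $t_{cut}$ directly, and since the Heisenberg cut-time is known to be attained exactly there (no earlier conjugate point), no optimality can be lost before $t_{cut}$ in $N$ either. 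I would finish by citing \cite{ABB,hzn,mope} for the Heisenberg cut-locus to close the argument.
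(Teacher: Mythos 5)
Your overall route is the same as the paper's: identify the geodesics \eqref{newgeoH} with Heisenberg geodesics and transport the classical cut time $2\pi/K=2\pi\sqrt{C_1^2+C_2^2}$ (the paper phrases the identification as $C_\fn/SO(3)\cong\mathbb H_3$ via the invariants, you phrase it as the subgroup $\{\ell_2=\ell_3=y_2=y_3=0\}\subset N$; these are equivalent). The genuine gap sits exactly at the step you yourself flag as the main obstacle, and your proposed fix does not close it. The principle ``an isometric embedding that is also a subgroup inclusion carries minimizers to minimizers'' is circular: if ``isometric'' means distance-preserving (the restriction of $d_N$ to the Heisenberg page equals $d_{\mathbb H_3}$), that is precisely the assertion in need of proof; if it only means that the induced sub-Riemannian structure on the subgroup is the Heisenberg one, the implication fails in general, because a competing shorter curve in $N$ may leave the subgroup, and the absence of earlier conjugate or Maxwell points along geodesics of $\mathbb H_3$ says nothing about such competitors. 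Two ways to actually close it: (i) the paper's way, via Theorem \ref{thm1} --- a length minimizer from the origin to a point of $C_\fn$ is a geodesic, and any geodesic meeting $C_\fn$ lies in $C_\fn$ for all times, so all competitors stay in $C_\fn$ and the Heisenberg identification decides optimality; or (ii) observe from \eqref{grupa} that $(x,\ell,y)\mapsto(x,\ell_1,y_1)$ is a group homomorphism onto $\mathbb H_3$ sending $N_0,N_1$ to the standard Heisenberg frame and $N_2,N_3$ to zero, hence a length-nonincreasing retraction of $N$ onto the page, which forces the page to be metrically embedded. You use neither, so the lower bound $t_{cut}\geq 2\pi\sqrt{C_1^2+C_2^2}$ is unproved in your write-up.

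A second, smaller error: the $SO(3)$-action of Corollary \ref{corsym} does not furnish the Maxwell family. A rotation fixing the endpoint must fix the vector $y(t_{cut})$, i.e., it must be a rotation about the common axis of $\ell(t)$ and $y(t)$; but such a rotation fixes the whole geodesic pointwise, since along a geodesic in $C_\fn$ both $\ell(t)$ and $y(t)$ are multiples of that axis. The true Maxwell family consists of the geodesics \eqref{newgeoH} with the same value of $C_1^2+C_2^2$ and different phase of $(C_1,C_2)$: they all reach the same point, $(0,0,2\pi(C_1^2+C_2^2))$ in page coordinates, at $t=2\pi\sqrt{C_1^2+C_2^2}$. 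This phase rotation is the Heisenberg rotational symmetry and is not induced by Proposition \ref{symmetries}; indeed it is not even an automorphism of $\fn$, since it would have to send $[N_1,N_2]=0$ to $[-\sin\theta\, N_0+\cos\theta\, N_1,N_2]=-\sin\theta\, N_{02}\neq 0$. So the Maxwell claim should be obtained, as in the paper, from the Heisenberg cut-locus result you cite (or by this direct computation from \eqref{newgeoH}), not from Corollary \ref{corsym}. Your minor arithmetic slip in evaluating $\bar y_1(t_{cut})$ (it equals $2\pi(C_1^2+C_2^2)$, not $2\pi\sqrt{C_1^2+C_2^2}$) is harmless but symptomatic of that confusion.
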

\begin{proof} 
Since $\bar \ell_1 = \sqrt{(\ell,\ell)}$ and $\bar y_1 = \sqrt{(y,y)}$ are invariants, see \eqref{inv}, the expression \eqref{newgeoH} defines a curve in the factor space $C_{\fn}/SO(3,\R)$.  
For the choice of polar coordinates in the plane $\langle C_1,C_2 \rangle$ 
we get the standard description of geodesics on three--dimensional Heisenberg group $\mathbb H_3$. Indeed, the tangent space to the subgroup $C_{\fn}$
is generated by pushout vectors 
\begin{align*}
\begin{split}
\bar N_0=\partial_{x}-{\bar \ell_1 \over 2}{\partial_{\bar y_1}},\ \ \ \ 
\bar N_1=\partial_{\bar \ell_1}+{x \over 2} \partial_{\bar y_1}
\end{split}
\end{align*}
that are standard generators of Heisenberg Lie algebra. 
The group law 
\eqref{grupa} on the subgroup $C_{\fn}$ defines an isomorphism  $C_{\fn}/SO(3,\R) \cong \mathbb H_3$.
 The cut locus of the Heisenberg group consists of the
set of points $$\{(0, y ) \in \mathbb R^2 \oplus \wedge^2 \mathbb R^2 :  y \neq 0  \}.$$  Namely, 
any geodesic from the origin loses its optimality at the point where it meets the vertical axis $(0, 0, \bar y)$ for the first time. These points are Maxwell points and the corresponding time equals to $t_{cut} = \frac{2\pi}{K}$. 
Sub--Riemannian geodesics in $C_{\fn}$ going from the origin to the point 
$(x,\ell,\lambda \ell)$ form a preimage of the Heisenberg
geodesic in $C_{\fn}/SO(3,\R)$ going from the origin to the point 
$(x,|\ell|,\lambda |\ell|)$, where $|\ell|^2=(\ell,\ell)$. These geodesics have the same length and they lose their optimality at the same time. 
\end{proof} 

Since the geodesics contained in $C_{\fn}$ are preimages of Heisenberg geodesics under the  $SO(3,\R)$--action, we can visualize them in the same way.  On the left hand side of Figure \ref{HG}, there is  so--called Heisenberg sub--Riemannian sphere. On the right side of the same figure, there is a half--sphere with a family of geodesics from origin to the sphere.     
 \begin{figure}[h]
 \begin{center}
 \includegraphics[height=50mm]{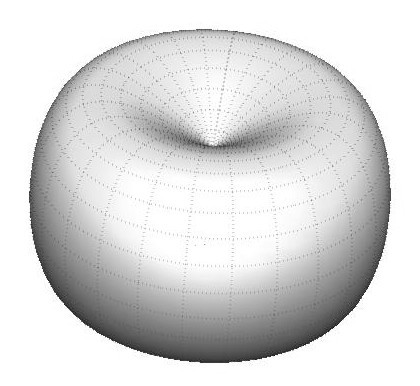}
 \includegraphics[height=50mm]{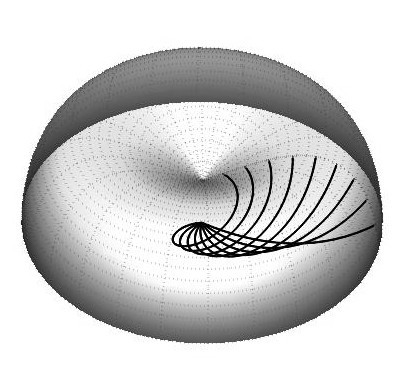}
 \caption{Heisenberg sub--Riemannian sphere and geodesics from origin to the 
 points of the sphere 
 }
 \label{HG}
 \end{center}
 \end{figure}

\section{Declarations}
Conflict of interest: The authors declare that they have no conflict of interest.

\end{document}